\begin{document}

\markboth{Hemanth Saratchandran, Jiaogen Zhang, Pan Zhang}{A new higher order Yang--Mills--Higgs flow in Riemannian $4$-manifold}

\catchline{}{}{}{}{}

\title{A new higher order Yang--Mills--Higgs flow on Riemannian $4$-manifolds}

\author{Hemanth Saratchandran$^1$, Jiaogen Zhang$^2$, Pan Zhang$^3$}

\address{1.School of Mathematical Sciences, University of Adelaide, Adelaide 5005, Australia; hemanth.saratchandran@adelaide.edu.au}

\address{2. School of Mathematical Sciences, University of Science and Technology of China, Hefei 230026, P.R. China; zjgmath@ustc.edu.cn}

\address{3. School of Mathematical Sciences, Anhui University, Hefei 230601, P.R. China; panzhang20100@ahu.edu.cn}

\maketitle

\begin{abstract}
{\it Abstract:}
Let $(M,g)$ be a closed Riemannian $4$-manifold and let $E$ be a vector bundle over $M$ with structure group $G$, where $G$ is a compact Lie group. In this paper, we consider a new higher order Yang--Mills--Higgs functional, in which the Higgs field is a section of $\Omega^0(\textmd{ad}E)$. We show that, under suitable conditions, solutions to the gradient flow do not hit any finite time singularities. In the case that $E$ is a line bundle, we are able to use a different blow up procedure and obtain an improvement of the long time result in \cite{Z1}. The proof is rather relevant to the properties of the Green function, which is very different from the previous techniques in \cite{Ke,Sa,Z1}.
\end{abstract}

\keywords{higher order Yang--Mills--Higgs flow; line bundle; long-time existence}

\ccode{2020 Mathematics Subject Classification. 58C99; 58E15; 81T13}

\section{Introduction}	

Let $(M,g)$ be a closed Riemannian manifold of real dimension $4$ and let $E$ be a vector bundle over $M$ with structure group $G$, where $G$ is a compact Lie group. The Yang--Mills functional, defined on the space of connections of $E$, is given by
\begin{equation*} \label{YMF}
\mathcal{YM}(\nabla)=\frac{1}{2}\int_M|F_{\nabla}|^2d\mathrm{vol}_g,
\end{equation*}
where $\nabla$ is a metric compatiable connection, $F_{\nabla}$ denotes the curvature and the pointwise norm $|\cdot|$ is given by $g$ and the Killing form of $\mathrm{Lie}(G)$.

$\nabla$ is called a Yang--Mills connection of $E$ if it satisfies the Yang--Mills equation
\begin{equation*} \label{YME}
D^*_{\nabla}F_{\nabla}=0.
\end{equation*}
A solution of the Yang--Mills flow is given by a family of connections $\nabla_t:=\nabla(x,t)$ such that
\begin{equation*} \label{YMF}
\frac{\partial \nabla_t}{\partial t}=-D^*_{\nabla_t}F_{\nabla_t}\quad in \quad M\times [0,T).
\end{equation*}
The Yang--Mills flow was initially studied by Atiyah--Bott \cite{AB} and was suggested to understand the topology of the space of connections by infinite dimensional Morse theory.

We consider the Yang--Mills--Higgs $k$-functional (or Yang--Mills--Higgs $k$-energy)
\begin{equation} \label{YMHKF}
\mathcal{YMH}_k(\nabla,u)=\frac{1}{2}\int_M\Big[|\nabla^{(k)}F_{\nabla}|^2+|\nabla^{(k+1)}u|^2\Big]d\mathrm{vol}_g,
\end{equation}
where $k\in \mathbb{N}\cup \{0\}$, $\nabla$ is a connection on $E$ and $u$ is a section of $\Omega^0(\textmd{ad}E)$. In \cite{Z1}, we have considered the case when $u$ is a section of $\Omega^0(E)$. When $k=0$, \eqref{YMHKF} is the Yang--Mills--Higgs functional studied in \cite{H,HT}. In \cite{H}, Hassell proved the global existence of the Yang--Mills--Higgs flow in 3-dimensional Euclidean space. In \cite{HT}, Hong--Tian studied the global existence of Yang--Mills--Higgs flow in 3-dimensional Hyperbolic space, their results yields non-self dual Yang--Mills connections on $S^4$. In the new century, the study of Yang--Mills--Higgs flow has aroused a lot of attention (see \cite{Af,HT1,LZ,SW,Tr,Wi,ZZZ} and references therein).

The Yang--Mills--Higgs $k$-system, i.e. the corresponding Euler--Lagrange equations  of \eqref{YMHKF}, is
\begin{equation} \label{EL}
\begin{cases}
(-1)^kD^*_{\nabla}\Delta_{\nabla}^{(k)}F_{\nabla}+\sum\limits_{v=0}^{2k-1}P_1^{(v)}[F_{\nabla}]+P_2^{(2k-1)}[F_{\nabla}]\\
\quad \quad +\sum\limits_{i=0}^k\nabla^{*(i)}(\nabla^{(k+1)}u\ast \nabla^{(k-i)}u)=0,\\
\nabla^{*(k+1)}\nabla^{(k+1)}u=0,
\end{cases}
\end{equation}
where $\Delta_{\nabla}^{(k)}$ denotes $k$ iterations of the Bochner Laplacian $-\nabla^*\nabla$, and the notation $P$ is defined in \eqref{P}.

A solution of the Yang--Mills--Higgs $k$-flow is given by a family of pairs $(\nabla(x,t),u(x,t)):=(\nabla_t,u_t)$ such that
\begin{equation} \label{YMHKS}
\begin{cases}
\frac{\partial \nabla_t}{\partial t}&=(-1)^{(k+1)}D^*_{\nabla_t}\Delta_{\nabla_t}^{(k)}F_{\nabla_t}+\sum\limits_{v=0}^{2k-1}P_1^{(v)}[F_{\nabla_t}]\\
&~~+P_2^{(2k-1)}[F_{\nabla_t}]+\sum\limits_{i=0}^k\nabla_t^{*(i)}(\nabla_t^{(k+1)}u_t\ast \nabla_t^{(k-i)}u_t), \\
\frac{\partial u_t}{\partial t}&=-\nabla^{*(k+1)}_t\nabla_t^{(k+1)}u_t, \quad in \quad M\times [0,T).
\end{cases}
\end{equation}
When $k=0$, the flow \eqref{YMHKS} is a Yang--Mills--Higgs flow \cite{HT}.

From an analytic point of view, the Yang--Mills--Higgs $k$-flow \eqref{YMHKS} admits
similar properties to the case in which the Higgs field takes value in $\Omega^0(E)$.
In fact, by the approach in \cite{Z1}, we can prove the following theorem.

\begin{theorem} \label{mth1}
Let $E$ be a vector bundle over a closed Riemannian $4$-manifold $(M,g)$. Assume the integer $k> 1$, for every smooth initial data $(\nabla_0,u_0)$, there exists a unique smooth solution $(\nabla_t,u_t)$ to the Yang--Mills--Higgs $k$-flow \eqref{YMHKS} in $M\times [0,+\infty)$.
\end{theorem}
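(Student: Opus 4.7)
My strategy is to adapt the approach of \cite{Z1}, which handled the analogous flow with Higgs field $u\in\Omega^0(E)$, to the present setting where $u\in\Omega^0(\textmd{ad}E)$. The crucial observation is that the Euler--Lagrange system \eqref{EL} and the flow \eqref{YMHKS} have the same leading-order parabolic structure in both cases: both components evolve by quasilinear equations of order $2k+2$, and the coupling terms $P_1^{(v)}[F_{\nabla}]$, $P_2^{(2k-1)}[F_{\nabla}]$ and $\nabla^{*(i)}(\nabla^{(k+1)}u\ast\nabla^{(k-i)}u)$ involve only derivatives of order at most $2k$. Since $u$ is now valued in $\textmd{ad}E$, the bracket $[u,\cdot]$ and the covariant derivatives $\nabla^{(j)}u$ remain well-defined, and every structural identity and interpolation inequality used in \cite{Z1} goes through with only notational modifications.

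First I would establish short-time existence and uniqueness. Because of gauge invariance the symbol of \eqref{YMHKS} is degenerate, but adding a DeTurck-type gauge fixing term produces a strictly parabolic quasilinear system of order $2k+2$ to which standard parabolic theory applies; undoing the resulting gauge transformation yields a unique smooth solution on a maximal interval $[0,T_{\max})$. The aim is then to prove $T_{\max}=+\infty$ by contradiction: assume $T_{\max}<\infty$ and invoke the standard continuation criterion, which states that if $F_{\nabla_t}$, $u_t$ and all their covariant derivatives remain uniformly bounded on $[0,T_{\max})$, the flow extends past $T_{\max}$.

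To obtain those uniform bounds, I would use energy monotonicity $\frac{d}{dt}\mathcal{YMH}_k(\nabla_t,u_t)\le 0$ (verified by direct computation using \eqref{EL}), which yields an a priori bound on $\|\nabla_t^{(k)}F_{\nabla_t}\|_{L^2}^2+\|\nabla_t^{(k+1)}u_t\|_{L^2}^2$. Since $\dim M=4$ and $k>1$, the Sobolev embedding $W^{k,2}\hookrightarrow L^p$ holds for every finite $p$, and Gagliardo--Nirenberg interpolation then yields $L^\infty$ control on $F_{\nabla_t}$, $u_t$ and their lower-order derivatives throughout $[0,T_{\max})$. A Moser-type bootstrap, testing the evolution equations against $\Delta^{(m)}$ of the relevant quantities and iterating, upgrades these to bounds on covariant derivatives of all orders, closing the argument.

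The main obstacle will be the bootstrap step, where the nonlinearities $P_1^{(v)}$, $P_2^{(2k-1)}$ and the cross terms $\nabla^{*(i)}(\nabla^{(k+1)}u\ast\nabla^{(k-i)}u)$ must be carefully matched against the highest-order dissipation produced by $D^*_{\nabla}\Delta_{\nabla}^{(k)}F_{\nabla}$ and $\nabla^{*(k+1)}\nabla^{(k+1)}u$. The hypothesis $k>1$ is exactly what makes the functional \eqref{YMHKF} scaling-subcritical in dimension four: a rescaling computation shows the energy scales as $\lambda^{-2k}$ under $x\mapsto\lambda x$, so no concentration can develop in finite time, and every interpolation needed for the bootstrap has strictly positive room. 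For $k=0,1$ the exponents become critical or borderline, and qualitatively different techniques—such as the Green function analysis used below for the line bundle case—would be required.
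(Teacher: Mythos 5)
Your overall skeleton (DeTurck short-time existence, monotonicity of $\mathcal{YMH}_k$, Sobolev control of lower norms, a continuation criterion) matches the paper's intended route, which follows \cite{Z1}: local $L^2$ derivative estimates of Bernstein--Bando--Shi type, energy estimates, and blow-up analysis. However, there is a genuine gap at the decisive step. You pass from ``$\|F_{\nabla_t}\|_{L^p}+\|\nabla_t u_t\|_{L^p}$ bounded for every finite $p$'' to ``$L^\infty$ control'' by invoking Gagliardo--Nirenberg interpolation. That implication is false as stated: uniform $L^p$ bounds for all finite $p$ do not imply an $L^\infty$ bound, and no interpolation inequality closes this. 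The mechanism that actually does the work here (Proposition \ref{lptli} and Theorem \ref{bll} in the paper, and the analogous statements in \cite{Z1}) is a rescaling/blow-up argument: if the pointwise norm blows up, one rescales at the concentration points using the parabolic scaling of Proposition \ref{rf}, extracts a smooth nontrivial limit via the smoothing estimates of Corollary \ref{ce1} and Uhlenbeck gauge fixing, and then observes that a \emph{supercritical} $L^p$ bound forces the limit's curvature to vanish, a contradiction. Your proposal contains no blow-up step for Theorem \ref{mth1}, so the continuation criterion is never actually verified.

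Relatedly, your explanation of the threshold $k>1$ is not correct. The functional $\mathcal{YMH}_k$ is scaling-subcritical in dimension four for every $k\geq 1$ (it scales like $\lambda^{2k}$, not $\lambda^{-2k}$, under $x\mapsto\lambda x$ with the natural scaling of $\nabla$), so subcriticality of the energy cannot be what distinguishes $k>1$ from $k=1$. The actual source of the restriction is the Higgs field $u$ itself: in the general bundle case the obstruction to continuation involves $|u_t|$, which has lower scaling weight than $F_{\nabla_t}$ and $\nabla_t u_t$, so the blow-up argument requires an $L^p$ bound on $u$ with $p>4$ rather than $p>2$; the embedding $W^{k,2}(M^4)\subset L^p$ delivers such a $p$ only when $k>1$. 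This is precisely the point the paper exploits for line bundles: the Green function argument of Theorem \ref{obst2} bounds $\sup|u_t|$ from $\sup|\nabla_t u_t|$ and $\|u_t\|_{L^2}$, removing $|u_t|$ from the obstruction and lowering the threshold to $k>0$. You should also note that the Sobolev step needs the full $W^{k,2}$ norm of $F_{\nabla_t}$, not just $\|\nabla_t^{(k)}F_{\nabla_t}\|_{L^2}$; the missing lower-order piece is supplied by the separate energy estimate of Proposition \ref{ymhe}, which is not a consequence of monotonicity of $\mathcal{YMH}_k$ alone.
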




Our motivation for considering such flows comes from recent work of A. Waldron who was able to prove long time existence for the Yang--Mills flow \cite{Wa}, thereby settling a long standing conjecture in the area. In the context of the Yang--Mills--Higgs flow it is still unknown whether the flow exists for all times on a Riemannian 4-manifold.
The above theorem shows that provided $k > 1$, the Yang-Mills-Higgs $k$ flow does obey long time existence on a 4-manifold. A question that arises at this point is to understand what the optimum value for $k$ is.
By assuming our bundle $E$ is a line bundle, we are able to make progress on this question and show that long time existence holds for all positive $k$:

\begin{theorem} \label{mth2}
Let $E$ be a line bundle over a closed Riemannian $4$-manifold $(M,g)$. Assume the integer $k>0$, for every smooth initial data $(\nabla_0,u_0)$, there exists a unique smooth solution $(\nabla_t,u_t)$ to the Yang--Mills--Higgs $k$-flow \eqref{YMHKS} in $M\times [0,+\infty)$.
\end{theorem}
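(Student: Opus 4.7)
The plan is to exploit the abelian nature of the structure group $U(1)$ of a line bundle to simplify the flow equations dramatically, and then handle the remaining critical case $k=1$ (which is excluded by Theorem \ref{mth1}) via a blow-up argument driven by Green function estimates.

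Since $E$ is a line bundle, $\mathrm{ad}E$ is a trivial real line bundle, so $u$ reduces to a real valued function and every commutator in the covariant derivatives vanishes. After choosing a local trivialization one writes $\nabla = d + A$ with connection $1$-form $A$, and $F_\nabla = dA$ becomes a globally defined closed $2$-form on $M$. The second equation of \eqref{YMHKS} then becomes the linear higher order heat equation $\partial_t u = -\Delta^{k+1} u$, whose long time existence and smoothness is immediate from linear parabolic theory, and which yields uniform bounds on $\|u_t\|_{H^s}$ for every $s$. The first equation for $A_t$ remains semi-linear, but the absence of commutators kills the $P_2^{(2k-1)}[F]$ type term and reduces each $P_1^{(v)}[F]$ to a contraction of derivatives of the curvature with itself through the metric alone.

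Short time existence and a DeTurck type gauge fixing proceed exactly as in \cite{Z1}, so it only remains to rule out finite time singularities. Suppose the maximal existence time is $T < \infty$. I would then perform a parabolic blow-up at a singular point with the scaling $(x,t) \mapsto (\lambda x, \lambda^{2k+2} t)$ dictated by the top order operator $\Delta^{k+1}$. The new idea is to work directly with $A$ rather than $F$, imposing the Coulomb gauge $d^* A = 0$, and to recover $A$ from $F$ via the Green function representation $A = d^* G[F]$ on the closed $4$-manifold, where $G$ is the Hodge Green operator. In dimension $4$, $G$ has the standard $\mathrm{dist}(x,y)^{-2}$ on-diagonal singularity, so one obtains $L^p$ and Schauder type estimates for $A$ in terms of $F$ uniformly along the flow. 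Combined with the monotonicity $\tfrac{d}{dt}\mathcal{YMH}_k(\nabla_t,u_t) \le 0$, these yield a priori bounds strong enough to pass to a blow-up limit.

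The main obstacle is the borderline scaling at $k=1$: the energy $\int_M |\nabla F|^2 + |\nabla^2 u|^2$ on a $4$-manifold sits exactly at the threshold where a direct Sobolev embedding fails to produce pointwise control of $F$, and this is precisely what puts $k=1$ outside the reach of Theorem \ref{mth1}. The Green function representation is essential here, because it converts borderline integral control on $F$ into pointwise control on $A$, which is one derivative better and hence subcritical. Iterating this step while carefully tracking the nonlinear error terms under the $(2k+2,1)$-parabolic rescaling, and then invoking a Liouville type rigidity for the blow-up limit on $\mathbb{R}^4$, one forces the rescaled limit to vanish identically, contradicting the choice of blow-up point. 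The hardest technical step will be ensuring that the Coulomb gauge can be maintained globally and uniformly along the flow on each rescaled time slice, so that the Green function representation of $A$ in terms of $F$ remains valid throughout the blow-up procedure.
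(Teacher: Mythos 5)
Your proposal shares the broad architecture of the paper's argument (blow-up analysis with the parabolic scaling $\rho^{2(k+1)}$ in time, monotonicity of $\mathcal{YMH}_k$, and a Green function input), but the decisive quantitative step is missing and is replaced by an unproven placeholder. The paper's mechanism is: (i) monotonicity of $\mathcal{YMH}_k$ plus boundedness of the ordinary Yang--Mills--Higgs energy and of $\|u_t\|_{L^2}$ give, via the interpolation inequalities and the Sobolev embedding $W^{k,2}\subset L^p$ on a $4$-manifold, a \emph{uniform $L^p$ bound on $|F_{\nabla_t}|+|\nabla_t u_t|$ with $p>2$} for every $k\geq 1$ (for $k=1$ one gets $p=4$); (ii) under the blow-up rescaling the $L^p$ norm scales like $\rho_i^{(2p-4)/(2k+2)}\to 0$ for $p>2$, so by Fatou's lemma any blow-up limit has vanishing $L^p$ norm, contradicting the normalization $|F^\infty_0(0)|+|\nabla^\infty_0 u^\infty_0(0)|=1$. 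Your ``Liouville type rigidity for the blow-up limit on $\mathbb{R}^4$'' is exactly the statement that needs proof here, and for a higher order flow no off-the-shelf Liouville theorem is available; without step (i)--(ii) your argument does not close. Note also that your framing of $k=1$ as ``borderline'' is slightly off: the point is not to get pointwise control of $F$ from Sobolev, but merely an $L^p$ bound with $p$ strictly above the scaling-critical exponent $2$, which $k=1$ already achieves.

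Two further issues. First, the Green function enters the paper in a different place and for a different purpose: it is applied to the Bochner Laplacian $\Delta_{\nabla_t}$ to convert $\sup|\nabla_t u_t|<\infty$ plus $\sup_t\|u_t\|_{L^2}<\infty$ into $\sup|u_t|<\infty$ (using that $[\nabla_t-\nabla_0,G_t]=0$ on the trivial adjoint bundle of a line bundle), which is what allows the long-time obstruction to be phrased purely in terms of $|F_{\nabla_t}|+|\nabla_t u_t|$. Your proposed use --- recovering $A$ pointwise from $F$ via the Hodge Green operator in Coulomb gauge --- is doubtful as stated: in dimension $4$, $L^p$ control of $F$ yields $C^0$ control of $A$ only for $p>4$, so it does not ``convert borderline integral control on $F$ into pointwise control on $A$.'' Second, your observation that for a line bundle the $u$-equation linearizes and decouples is reasonable (the adjoint action is trivial), and would indeed simplify the Higgs part, but it does not address the curvature concentration that the blow-up argument must exclude, and the paper's proof does not rely on this decoupling. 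As written, the proposal therefore has a genuine gap at the heart of the finite-time-singularity exclusion.
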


We point out that at present we don't know if the above theorem is optimal. Meaning we cannot rule out the case that long time existence occurs for $k=0$.

The proof of Theorem \ref{mth1} involves local $L^2$ derivative estimates, energy estimates and blowup analysis. An interesting aspect of this work is that by using
a different blow up procedure we are able to obtain a proof of Theorem \ref{mth2}, which may be of independent interest. Another interesting aspect is that the proof of long-time existence obstruction (see Theorem \ref{obst2}) is rather relevant to the properties of the Green function, which is very different from the previous techniques in \cite{Ke,Sa,Z1}.

\section{Preliminaries} \label{pre}

In this section we introduce the basic setup and notation that will be used throughout the paper. Our approach follows the notation of \cite{Ke}, \cite{Sa}, \cite{Z1}.


Let $E$ be a vector bundle over a smooth closed manifold $(M,g)$ of real dimension $n$. The set of all smooth unitary connections on $E$ will be denoted by $\mathcal{A}_E$. A given connection $\nabla\in \mathcal{A}_E$ can be extended to other tensor bundles by coupling with the corresponding Levi--Civita connection $\nabla_M$ on $(M,g)$.

Let $D_{\nabla}$ be the exterior derivative, or skew symmetrization of $\nabla$. The curvature tensor of $E$ is denoted by
 $$F_{\nabla}=D_{\nabla}\circ D_{\nabla}.$$

We set $\nabla^{*},D_{\nabla}^{*}$ to be the formal $L^2$-adjoint of $\nabla,D_{\nabla}$, respectively. The Bochner and Hodge Laplacians are given respectively by
 $$\Delta_{\nabla}=-\nabla^{*}\nabla, \quad \Delta_{D_{\nabla}}=D_{\nabla}D^*_{\nabla}+D^*_{\nabla}D_{\nabla}.$$

Let $\xi,\eta$ be $p$-forms valued in $E$ or $\mathrm{End}(E)$. Let $\xi\ast \eta$ denote any mulitilinear form obtained from a tensor product $\xi\otimes\eta$ in a universal way. That is to say, $\xi\ast \eta$ is obtained by starting with $\xi\otimes\eta$, taking any linear combination of this tensor, taking any number of metric contractions, and switching any number of factors in the product. We then have $$|\xi\ast\eta|\leq C|\xi||\eta|.$$

Denote by
$$\nabla^{(i)}=\underbrace{\nabla\cdots\nabla}_{i\ \mathrm{times}}.$$

We will also use the $P$ notation, as introduced in \cite{KS}. Given a tensor $\xi$, we denote by
\begin{equation} \label{P}
P_v^{(k)}[\xi]:=\sum_{w_1+\cdots+w_v=k}(\nabla^{(w_1)}\xi)\ast\cdots\ast(\nabla^{(w_v)}\xi)\ast T,
\end{equation}
where $k,v\in \mathbb{N}$  and $T$ is a generic background tensor dependent only on $g$.

\vskip 1cm

\section{Long-time existence obstruction} \label{pot}

We can use De Turck's trick to establish the local existence of the Yang--Mills--Higgs $k$-flow. We refer to \cite{Ke,Sa,Z1} for more details. As the proof is standard, we will omit the details.

\begin{theorem} \label{le} (local existence)
Let $E$ be a vector bundle over a closed Riemannian manifold $(M,g)$. There exists a unique smooth solution $(\nabla_t,u_t)$ to the Yang--Mills--Higgs $k$-flow \eqref{YMHKS} in $M\times [0,\epsilon)$ with smooth initial value $(\nabla_0,u_0)$.
\end{theorem}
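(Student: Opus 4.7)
The plan is to follow the standard De Turck strategy used in \cite{Ke,Sa,Z1}, adapted to the higher order coupled system \eqref{YMHKS}, and I will indicate the places where the Higgs equation needs to be handled together with the connection equation. The main obstacle is that the connection equation is not strictly parabolic, because the top-order operator $(-1)^{k+1}D^*_{\nabla}\Delta^{(k)}_{\nabla}F_{\nabla}$ has a kernel in its principal symbol coming from the infinite dimensional action of the gauge group. Once that kernel is killed, both equations become strictly parabolic quasilinear systems of order $2k+2$, and standard theory applies.

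Concretely, fix a smooth reference connection $\widetilde{\nabla}$ and write $\nabla_t=\widetilde{\nabla}+A_t$ for a time-dependent $\mathrm{ad}E$-valued $1$-form $A_t$. A direct computation of the linearization of the right hand side of the first equation in \eqref{YMHKS} shows that its principal symbol, viewed as a map on $T^*M\otimes\mathrm{ad}E$, is $(-1)^{k+1}|\xi|^{2k}(|\xi|^{2}\,\mathrm{Id}-\xi\otimes\xi^\sharp)$, and the failure of ellipticity is exactly the range of the coupling to gauge transformations. Following De Turck, I would modify the first equation in \eqref{YMHKS} by adding a term of the form
\begin{equation*}
(-1)^{k}D_{\nabla_t}\bigl(\Delta^{(k)}_{\widetilde{\nabla}}(\widetilde{\nabla}^*A_t)\bigr),
\end{equation*}
(or any equivalent gauge-fixing term of order $2k+1$ in $A_t$ whose linearization fills in the missing $\xi\otimes\xi^\sharp$ piece) so as to turn the modified connection equation into a strictly parabolic quasilinear PDE for $A_t$ of order $2k+2$, with positive definite principal symbol $|\xi|^{2k+2}\,\mathrm{Id}$. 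The Higgs equation is already strictly parabolic of order $2k+2$ with principal part $(-1)^{k+1}\Delta^{k+1}u_t$, once the connection is regarded as given; the coupling to $\nabla_t$ enters only through lower order terms and so does not affect parabolicity of the coupled system.

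Having produced a strictly parabolic quasilinear system in $(A_t,u_t)$ with smooth initial data $(0,u_0)$, I would invoke standard short-time existence results for quasilinear parabolic systems of arbitrary order on closed manifolds (for example in the form used in \cite{Ke,Sa,Z1}, ultimately based on linearization, Schauder or $L^p$ estimates, and a contraction mapping argument) to produce a unique smooth solution $(A_t,u_t)$ of the modified system on some interval $[0,\epsilon)$. The corresponding pair $(\widetilde{\nabla}+A_t,u_t)$ solves a gauge-fixed version of \eqref{YMHKS}.

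Finally, to recover a solution of \eqref{YMHKS} itself, I would undo the gauge fixing by solving the auxiliary ODE on the gauge group whose infinitesimal generator cancels the extra De Turck term, i.e.\ $\partial_t g_t=\eta(A_t)\cdot g_t$ with $g_0=\mathrm{Id}$, where $\eta(A_t)$ is the section of $\mathrm{ad}E$ appearing inside the De Turck correction. Since $\eta(A_t)$ is smooth in space and time on $[0,\epsilon)$, this ODE has a smooth solution $g_t$ valued in the gauge group, and the gauged pair $(g_t^*\nabla_t,g_t^{-1}u_tg_t)$ is the desired smooth solution of \eqref{YMHKS}. Uniqueness follows by running the same argument in reverse: any two smooth solutions of \eqref{YMHKS} can be gauge transformed to solutions of the same strictly parabolic modified system with the same initial data, and strict parabolicity gives uniqueness there. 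I expect the hardest bookkeeping step to be checking the symbol computation for the De Turck corrected equation at order $2k+2$, but this is a mechanical adaptation of the $k=0$ case in \cite{Ke,Sa} and the vector-valued Higgs case in \cite{Z1}, which justifies omitting the details in the statement.
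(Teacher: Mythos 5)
Your proposal takes essentially the same approach as the paper: for Theorem \ref{le} the authors simply invoke De Turck's trick as in \cite{Ke,Sa,Z1} and omit all details, and your sketch fills in exactly those standard steps (gauge-fixing term to break the degeneracy of the principal symbol, short-time existence for the resulting strictly parabolic quasilinear system of order $2k+2$, and undoing the gauge fixing via an ODE on the gauge group). One minor slip: the De Turck correction must be of order $2k+2$ in $A_t$ (as your displayed formula $(-1)^{k}D_{\nabla_t}\bigl(\Delta^{(k)}_{\widetilde{\nabla}}(\widetilde{\nabla}^*A_t)\bigr)$ in fact is), not $2k+1$, since otherwise it could not contribute the missing $\xi\otimes\xi^\sharp$ piece to the principal symbol.
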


Following \cite{Ke,Sa}, we can derive estimates of Bernstein--Bando--Shi type, which is similar to \protect {\cite[Proposition 4.10]{Z1}}.

\begin{proposition} \label{bbs}
Let $q\in \mathbb{N}$ and $\gamma\in C^{\infty}_c(M)$ $(0\leq \gamma\leq 1)$. Suppose $(\nabla_t,u_t)$ is a solution to the Yang--Mills--Higgs $k$-flow \eqref{YMHKS} defined on $M\times I$. Suppose $Q=\max\{1,\sup\limits_{t\in I} |F_{\nabla_t}|\}$, $K=\max\{1,\sup\limits_{t\in I}|u_t|\}$, and choose $s\geq (k+1)(q+1)$. Then for $t\in [0,T)\subset I$ with $T<\frac{1}{(QK)^{4}}$, there exists a positive constant $C_q:=C_q(\mathrm{dim} (M),\mathrm{rk} (E),G,q,k,s,g,\gamma)\in \mathbb{R}_{>0}$ such that
\begin{equation} \label{bbsi}
\|\gamma^{s}\nabla_t^{(q)}F_{\nabla_t}\|_{L^2}^2+\|\gamma^{s}\nabla_t^{(q)}u_t\|_{L^2}^2
\leq C_qt^{-\frac{q}{k+1}}\sup_{t\in[0,T)}(\|F_{\nabla_t}\|^2_{L^2}+\|u_t\|^2_{L^2}).
\end{equation}
\end{proposition}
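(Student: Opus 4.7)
I would adapt the Bernstein--Bando--Shi framework of \cite{Ke,Sa,Z1} to the coupled parabolic system \eqref{YMHKS}. Since the flow has parabolic order $2(k+1)$, the natural scaling $\nabla^{(q)} \sim t^{-q/(2(k+1))}$ predicts the factor $t^{-q/(k+1)}$ after squaring in $L^2$. The proof proceeds by induction on $q$, carrying forward a single weighted integral functional that packages derivatives of $F_{\nabla_t}$ and $u_t$ together so that the cross-coupling in \eqref{YMHKS} can be absorbed.

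\textbf{Key steps.} Differentiating the flow, using the second Bianchi identity and the Weitzenb\"ock formula $D_\nabla D_\nabla^* = \nabla^*\nabla + R \ast (\cdot)$, and commuting $\nabla$ through $\partial_t$ a total of $q$ times, one obtains Schatz-type evolution equations schematically of the form
\begin{equation*}
\Bigl( \partial_t + (\nabla_t^*\nabla_t)^{k+1} \Bigr) \nabla_t^{(q)} F_{\nabla_t} = R_q^F, \qquad \Bigl( \partial_t + (\nabla_t^*\nabla_t)^{k+1} \Bigr) \nabla_t^{(q)} u_t = R_q^u,
\end{equation*}
where $R_q^F$ and $R_q^u$ are sums of $P$-type multilinear expressions in $F_{\nabla_t}, u_t$ and their derivatives whose total derivative count is strictly less than $q + 2(k+1)$ in each monomial. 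Pairing against $\gamma^{2s}\nabla_t^{(q)} F_{\nabla_t}$ and $\gamma^{2s}\nabla_t^{(q)} u_t$ respectively and integrating by parts, the leading-order term produces the coercive quantities $\|\gamma^s \nabla_t^{(q+k+1)} F_{\nabla_t}\|_{L^2}^2$ and $\|\gamma^s \nabla_t^{(q+k+1)} u_t\|_{L^2}^2$; every derivative landing on $\gamma$ costs exactly one power of $\gamma$, so the hypothesis $s \geq (k+1)(q+1)$ leaves enough cutoff to absorb such boundary-type terms by Young's inequality. The remaining multilinear contributions in $R_q^F, R_q^u$ are controlled using Kato's inequality, the $L^\infty$ bounds $|F_{\nabla_t}|\leq Q$ and $|u_t|\leq K$, and Gagliardo--Nirenberg interpolation on the closed $4$-manifold $M$, leaving only strictly lower-order $L^2$ quantities on the right. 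Finally, introduce
\begin{equation*}
G_q(t) = \sum_{j=0}^{q} t^{j/(k+1)} \Bigl( \|\gamma^{s_j} \nabla_t^{(j)} F_{\nabla_t}\|_{L^2}^2 + \|\gamma^{s_j} \nabla_t^{(j)} u_t\|_{L^2}^2 \Bigr)
\end{equation*}
with $s_j$ a suitably decreasing integer sequence, and show inductively that $\frac{d}{dt} G_q \leq C\, G_0$ on $[0,T)$: the weight $t^{j/(k+1)}$ is designed so that the singular factor coming from $\frac{d}{dt} t^{j/(k+1)}$ is exactly cancelled by the coercive contribution at level $j-k-1$. Integrating over $[0,t]$ and using $T < (QK)^{-4}$ yields the stated bound after isolating the top term $j=q$.

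\textbf{Main obstacle.} The principal difficulty is the coupling term $\nabla_t^{*(i)}(\nabla_t^{(k+1)} u_t \ast \nabla_t^{(k-i)} u_t)$ in the $\nabla$-equation: after commuting $q$ covariant derivatives, this generates contributions to $R_q^F$ in which $\nabla_t^{(q+k+1)} u_t$ appears linearly, so the coercive good $u$-term at level $q$ must absorb contributions arising from the $F$-estimate. This is what forces $F_{\nabla_t}$ and $u_t$ to sit inside a single functional $G_q$, rather than being estimated independently as in the pure Yang--Mills case of \cite{Wa}. A secondary subtlety is that in real dimension four the sharp Sobolev embedding $W^{1,2} \hookrightarrow L^4$ is needed to close the interpolation for certain $P_v^{(w)}[F_{\nabla_t}]$ terms with $v \geq 3$, and the careful matching of the cutoff exponents $s_j$ across consecutive induction levels is precisely what fixes the explicit lower bound $s \geq (k+1)(q+1)$ in the hypothesis.
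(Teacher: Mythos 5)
Your outline matches the argument the paper itself invokes: the paper gives no self-contained proof here, deferring to the Bernstein--Bando--Shi scheme of Kelleher, Saratchandran, and \cite[Proposition 4.10]{Z1}, which is exactly the induction on $q$ via Schatz-type evolution equations for $\nabla_t^{(q)}F_{\nabla_t}$ and $\nabla_t^{(q)}u_t$, cutoff-weighted energy functionals with the telescoping weights $t^{j/(k+1)}$, and absorption of the multilinear $P$-terms using $Q$, $K$, the interpolation inequalities of Kuwert--Sch\"atzle, and the smallness condition $T<(QK)^{-4}$. Your identification of the cross-coupling term $\nabla_t^{*(i)}(\nabla_t^{(k+1)}u_t\ast\nabla_t^{(k-i)}u_t)$ as the reason the curvature and Higgs contributions must be packaged in a single functional is the correct adaptation of that scheme to the coupled system, so the proposal is essentially the same proof as the one the paper relies on.
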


The following corollary is a direct consequence of the above proposition, which will be used in the blow-up analysis. The proof relies on Sobolev embedding $W^{p,2}\subset C^0$ provided $p>\frac{n}{2}$, and the Kato's inequality $|d|u_t||\leq |\nabla_tu_t|$. More details can be found in Kelleher's paper (\protect {\cite[Corollary 3.14]{Ke}}).

\begin{corollary} \label{ce1}
Suppose $(\nabla_t,u_t)$ solves the Yang--Mills--Higgs $k$-flow \eqref{YMHKS} defined on $M\times [0,\tau]$.  Set $\bar{\tau}:=\min\{\tau,1\}$. Suppose $Q=\max\{1,\sup\limits_{t\in [0,\bar{\tau}]} |F_{\nabla_t}|\}$, $K=\max\{1,\sup\limits_{t\in [0,\bar{\tau}]}|u_t|\}$. Assume $\gamma\in C^{\infty}_c(M)$ $(0\leq \gamma\leq 1)$. For $s,l\in \mathbb{N}$ with $s\geq (k+1)(l+1)$ there exists $C_l:=C_l(\dim (M),\mathrm{rk}(E),K,Q,G,s,k,l,\tau,g,\gamma)\in \mathbb{R}_{>0}$ such that
\begin{equation*}
\sup_M\Big(|\gamma^{s}\nabla_{\bar{\tau}}^{(l)}F_{\nabla_{\bar{\tau}}}|^2+|\gamma^{s}\nabla_{\bar{\tau}}^{(l)}u_{\bar{\tau}}|^2\Big)
\leq C_l\sup_{M\times [0,{\bar{\tau}})}\Big(\|F_{\nabla_t}\|^2_{L^2}+\|u_t\|^2_{L^2}\Big).
\end{equation*}
\end{corollary}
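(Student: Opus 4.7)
The plan is to combine the weighted $L^2$ derivative estimate of Proposition \ref{bbs} with the Sobolev embedding $W^{p,2}(M) \hookrightarrow C^0(M)$, valid for $p > n/2$, together with Kato's inequality $|d|\xi|| \leq |\nabla \xi|$. Since $n = \dim M = 4$, taking $p = 3$ gives, for any smooth tensor $\xi$ valued in $E$ or $\mathrm{ad}E$,
\begin{equation*}
\sup_M |\xi|^2 \leq C\sum_{j=0}^{3} \|\nabla^{(j)}\xi\|_{L^2}^2.
\end{equation*}
Applied to $\xi = \gamma^{s}\nabla_{\bar\tau}^{(l)}F_{\nabla_{\bar\tau}}$ and $\xi = \gamma^{s}\nabla_{\bar\tau}^{(l)}u_{\bar\tau}$ (with Kato's inequality used to differentiate the pointwise norm rather than the full tensor), this reduces the corollary to a collection of $L^2$ bounds on $\gamma^{s'}\nabla_{\bar\tau}^{(q)}F_{\nabla_{\bar\tau}}$ and $\gamma^{s'}\nabla_{\bar\tau}^{(q)}u_{\bar\tau}$ for $q$ ranging from $l$ to $l+3$ and for suitable powers $s'$ of the cutoff produced by the Leibniz rule.

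Next I would invoke Proposition \ref{bbs} at time $\bar\tau$ with each such derivative order $q$. Because Proposition \ref{bbs} is stated only on time windows $T < (QK)^{-4}$, if $\bar\tau$ exceeds this threshold I would partition $[0,\bar\tau]$ into finitely many subintervals each of length less than $(QK)^{-4}$, the number of subintervals being controlled by $Q$, $K$ and $\bar\tau \leq 1$, and then iterate the estimate across them, feeding the output on one subinterval as the initial data for the next. The factor $t^{-q/(k+1)}$ appearing on the right hand side of \eqref{bbsi} is absorbed into $C_l$ by evaluating at the endpoint of each subinterval, which is bounded below in terms of $Q$, $K$ and $\tau$.

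The main obstacle is the combinatorial bookkeeping of cutoff powers. When one differentiates $\gamma^s \nabla_{\bar\tau}^{(l)}F_{\nabla_{\bar\tau}}$ up to three times, every summand in the Leibniz expansion has the form $(\nabla^{(a)}\gamma^s) \ast \nabla_{\bar\tau}^{(l+b)}F_{\nabla_{\bar\tau}}$ with $a+b \leq 3$, so the remaining weight is $\gamma^{s-a}$ and the derivative order on $F$ is $q = l+b$. One must check that $s - a \geq (k+1)(q+1)$ for each such term so that Proposition \ref{bbs} applies; the hypothesis $s \geq (k+1)(l+1)$, together with the freedom to enlarge $s$ while absorbing uniformly bounded derivatives of $\gamma$ into $C_l$, makes this work. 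The detailed verification follows the scheme of \protect{\cite[Corollary 3.14]{Ke}} essentially verbatim, the only modifications being the corresponding estimates for the Higgs term $u_t$ and the reading of Proposition \ref{bbs} in place of its Yang--Mills analogue.
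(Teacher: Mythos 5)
Your proposal is correct and follows essentially the same route as the paper, which derives the corollary from Proposition \ref{bbs} combined with the Sobolev embedding $W^{p,2}\subset C^0$ for $p>n/2$ and Kato's inequality, deferring the bookkeeping to \cite[Corollary 3.14]{Ke}. The only cosmetic difference is that the paper (following Kelleher) handles the restriction $T<(QK)^{-4}$ by applying the estimate on a short terminal window ending at $\bar\tau$ rather than by iterating across a partition, but both devices work.
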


Using Corollary \ref{ce1}, we have the the following corollary, which can be used for finding obstructions to long time existence.

\begin{corollary} \label{ce2}
Suppose $(\nabla_t,u_t)$ solves the Yang--Mills--Higgs $k$-flow \eqref{YMHKS} defined on $M\times [0,T)$ for $T\in[0,+\infty)$.  Suppose
 $$Q=\max\{1,\sup\limits_{t\in [0,T)} |F_{\nabla_t}|,\sup\limits_{t\in [0,T)} \|F_{\nabla_t}\|_{L^2}\}$$
 and
 $$ K=\max\{1,\sup\limits_{t\in [0,T)}|u_t|,\sup\limits_{t\in [0,T)}\|u_t\|_{L^2}\}$$
 are finite. Assume $\gamma\in C^{\infty}_c(M)$ $(0\leq \gamma\leq 1)$.  Then for $t\in [0,T)$, $s,l\in \mathbb{N}$ with $s\geq (k+1)(l+1)$, there exists $C_l:=C_l(\nabla_0,u_0,\dim (M),\mathrm{rk}(E),K,Q,G,s,k,l,g,\gamma)\in \mathbb{R}_{>0}$ such that
\begin{equation*}
\sup_{M\times[0,T)}\Big(|\gamma^{s}\nabla_t^{(l)}F_{\nabla_{t}}|^2+|\gamma^{s}\nabla_t^{(l)}u_t|^2\Big)\leq C_l.
\end{equation*}
\end{corollary}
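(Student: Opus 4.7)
The plan is to reduce the global bound to an iterated application of Corollary \ref{ce1} on shifted time windows of a fixed width, exploiting the uniform $Q, K$ bounds so that the constants appearing cannot degenerate as the window slides.

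First I would fix a window length $\delta := \min\{1, T/2\}$; the case $T=0$ is vacuous, and otherwise $\delta > 0$. Since the flow $(\nabla_t, u_t)$ is smooth on the compact set $M \times [0, \delta]$, every covariant derivative of $F_{\nabla_t}$ and $u_t$ is bounded there by some constant $A_l$ depending on $\nabla_0, u_0, g, l$, and $\gamma$. This immediately handles the portion of the interval near $t = 0$, where a direct appeal to Corollary \ref{ce1} would not produce a useful bound.

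Next, for each $t \in [\delta, T)$ I would apply Corollary \ref{ce1} to the time-shifted solution
\begin{equation*}
(\tilde \nabla_r, \tilde u_r) := (\nabla_{t-\delta+r}, u_{t-\delta+r}), \qquad r \in [0, \delta],
\end{equation*}
which is again a solution of \eqref{YMHKS} on a window of length $\delta$ because the flow does not depend explicitly on time. The supremum bounds $Q$ and $K$ transfer to this window, and choosing $\tau = \delta$ in Corollary \ref{ce1} gives $\bar\tau = \delta$, so evaluating its conclusion at $r = \delta$ (which is the original time $t$) yields
\begin{equation*}
\sup_M \left( |\gamma^s \nabla_t^{(l)} F_{\nabla_t}|^2 + |\gamma^s \nabla_t^{(l)} u_t|^2 \right) \leq C_l' \sup_{[t-\delta, t)} \left( \|F_{\nabla_\cdot}\|_{L^2}^2 + \|u_\cdot\|_{L^2}^2 \right) \leq C_l' (Q^2 + K^2),
\end{equation*}
where the constant $C_l'$ supplied by Corollary \ref{ce1} depends on the fixed choice $\tau = \delta$ but not on $t$. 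Setting $C_l := \max\{A_l, C_l'(Q^2+K^2)\}$ then produces the claimed uniform bound on all of $M \times [0, T)$.

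The main obstacle I anticipate is that the constant in Corollary \ref{ce1} depends on $\tau$ and, via the underlying Proposition \ref{bbs}, degenerates as $\tau \to 0$ (cf.\ the factor $t^{-q/(k+1)}$). It is precisely to avoid this degeneration that the window length $\delta$ must be chosen once and for all, independent of $t$; correspondingly, the initial slab $[0, \delta]$ cannot be treated by the same sliding-window method and must be absorbed into the constant using smoothness of the prescribed initial data.
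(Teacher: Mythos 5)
Your proposal is correct and is essentially the argument the paper intends: Corollary \ref{ce2} is presented as a direct consequence of Corollary \ref{ce1}, obtained by the standard time-translation trick of applying Corollary \ref{ce1} on sliding windows of a fixed length (so the $\tau$-dependent constant does not degenerate), bounding the right-hand side by $Q^2+K^2$, and absorbing the initial slab $[0,\delta]$ into the constant via smoothness of the data, which is exactly why $C_l$ is allowed to depend on $(\nabla_0,u_0)$. No gaps; the only implicit point worth noting is that the constant from Corollary \ref{ce1} must be monotone in the window's own $Q,K$ so that the global $Q,K$ can be substituted, which holds for these Bernstein--Bando--Shi type constants.
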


In the following, we will use Corollary \ref{ce2} to show that the only obstruction to long time existence of the Yang--Mills--Higgs $k$-flow \eqref{YMHKS} is a lack of supremal bound on $|F_{\nabla_t}|+|\nabla_tu_t|$. Before doing so, we need following proposition, which is similar to \protect {\cite[Proposition 4.15]{Z1}}.

\begin{proposition} \label{obst1}
Suppose $(\nabla_t,u_t)$ is a solution to the Yang--Mills--Higgs $k$-flow \eqref{YMHKS} defined on $M\times [0,T)$ for $T\in[0,+\infty)$. Suppose that for all $l\in \mathbb{N}\cup \{0\}$ there exists $C_l\in \mathbb{R}_{>0}$ such that
$$\max\Big\{\sup_{M\times[0,T)}|\nabla_t^{(l)}[\frac{\partial \nabla_t}{\partial t}]|, \sup_{M\times[0,T)}|\nabla_t^{(l)}[\frac{\partial u_t}{\partial t}]|\Big\}\leq C_l.$$
Then $\lim_{t\rightarrow T}(\nabla_t,u_t)=(\nabla_T,u_T)$ exists and is smooth.
\end{proposition}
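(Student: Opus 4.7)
The aim is to upgrade the hypothesised uniform bounds on the $\nabla_t$-covariant derivatives of the time derivatives $\partial_t\nabla_t$ and $\partial_t u_t$ into smooth convergence of $(\nabla_t,u_t)$ as $t\to T$. My plan is to first obtain $C^0$ convergence by integrating in time, then bootstrap to every $C^l$ bound by induction, and finally integrate again to deduce the claimed smooth limit.

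\textbf{Step 1: $C^0$ convergence.} Since the space of connections is affine, the difference $A_t := \nabla_t - \nabla_0$ is a smooth $\operatorname{End}(E)$-valued $1$-form with $\partial_t A_t = \partial_t \nabla_t$. The $l=0$ hypothesis gives $\sup_{M\times[0,T)}|\partial_t\nabla_t| \leq C_0$ and $\sup_{M\times[0,T)}|\partial_t u_t|\leq C_0$, whence for $0\leq s<t<T$,
\begin{equation*}
\sup_M|A_t-A_s|\;\leq\;\int_s^t \sup_M|\partial_r\nabla_r|\,dr\;\leq\;C_0|t-s|,\qquad \sup_M|u_t-u_s|\leq C_0|t-s|.
\end{equation*}
By the Cauchy criterion, $A_t$ converges uniformly to some continuous $A_T$ and $u_t$ converges uniformly to some continuous $u_T$; set $\nabla_T := \nabla_0 + A_T$.

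\textbf{Step 2: Conversion between $\nabla_t$- and $\nabla_0$-derivatives.} The main technical point is that the hypothesis controls derivatives with respect to the moving connection $\nabla_t$, whereas to integrate in time we need control with respect to the fixed connection $\nabla_0$. Since $\nabla_t = \nabla_0 + A_t$ acts on sections of $\operatorname{ad}E$-valued tensor bundles by a pointwise algebraic modification depending on $A_t$, an iteration gives a Leibniz-type formula of the schematic form
\begin{equation*}
\nabla_0^{(l)}\xi \;=\; \nabla_t^{(l)}\xi \;+\; \sum_{j=0}^{l-1}\;\sum_{i_1+\cdots+i_m+j=l,\;m\geq 1}(\nabla_0^{(i_1)}A_t)\ast\cdots\ast(\nabla_0^{(i_m)}A_t)\ast \nabla_t^{(j)}\xi,
\end{equation*}
with coefficients depending only on $g$. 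I prove by induction on $l$ that
\begin{equation*}
\sup_{M\times[0,T)}|\nabla_0^{(l)}A_t| \;\leq\; B_l,\qquad \sup_{M\times[0,T)}|\nabla_0^{(l)}u_t|\;\leq\;B_l,
\end{equation*}
for some constants $B_l$. The base case $l=0$ was Step 1. For the inductive step, apply the conversion formula with $\xi=\partial_t\nabla_t$ and $\xi=\partial_t u_t$: the first term on the right-hand side is bounded uniformly by hypothesis, and every term in the sum involves only $\nabla_0^{(j)}A_t$ with $j\leq l-1$ and $\nabla_t^{(j)}$ of $\partial_t\nabla_t$ or $\partial_t u_t$ with $j\leq l-1$, which in turn can be re-expanded by the same formula and estimated by the inductive hypothesis. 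Hence $\nabla_0^{(l)}[\partial_t\nabla_t]$ and $\nabla_0^{(l)}[\partial_t u_t]$ are uniformly bounded on $M\times[0,T)$, say by $\tilde{C}_l$. Then
\begin{equation*}
\sup_M|\nabla_0^{(l)}(A_t-A_s)|\;\leq\;\tilde{C}_l|t-s|,
\end{equation*}
and integrating from $0$ to $t$ together with the initial smoothness of $\nabla_0$ (so $\nabla_0^{(l)}A_0=0$) gives the claimed uniform bound $B_l$ on $\nabla_0^{(l)}A_t$; an identical argument handles $u_t$.

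\textbf{Step 3: Smooth limit.} Step 2 also shows that, for every $l$,
\begin{equation*}
\sup_M|\nabla_0^{(l)}(A_t-A_s)|\;\leq\;\tilde{C}_l|t-s|,\qquad \sup_M|\nabla_0^{(l)}(u_t-u_s)|\;\leq\;\tilde{C}_l|t-s|,
\end{equation*}
so $\{A_t\}$ and $\{u_t\}$ are Cauchy in $C^l(M)$ for every $l$. Therefore $A_t\to A_T$ and $u_t\to u_T$ in $C^\infty(M)$, and $(\nabla_T,u_T)=(\nabla_0+A_T,u_T)$ is the desired smooth limit.

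\textbf{Main obstacle.} The essential difficulty is the conversion step: the hypothesis is stated in terms of the moving covariant derivative $\nabla_t$, while convergence in time naturally happens in a fixed $\nabla_0$-framework. Executing the inductive interchange in Step 2, and in particular verifying that the $A_t$-dependent correction terms never spoil the bounds because they always involve strictly lower-order derivatives, is the delicate part of the argument; the rest is a routine application of the fundamental theorem of calculus in time.
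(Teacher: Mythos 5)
Your proof is correct and follows essentially the same route as the paper's own (omitted) argument, which defers to \cite[Proposition 4.15]{Z1} and the analogous result of Kelleher: namely, integrate the $l=0$ bound in time for $C^0$ convergence, inductively convert $\nabla_t$-derivatives of $\partial_t\nabla_t$ and $\partial_t u_t$ into $\nabla_0$-derivatives via the difference tensor $A_t=\nabla_t-\nabla_0$, and integrate again to get Cauchy sequences in every $C^l$. One small correction: in your schematic conversion formula the index constraint should be $i_1+\cdots+i_m+m+j=l$ (each $A_t$-factor absorbs one of the $l$ derivatives), which is precisely what forces every $i_a\leq l-1$ and keeps the induction non-circular, as you correctly assert in words immediately afterwards.
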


The following proposition is straightforward.

\begin{proposition} \label{l2}
Suppose $(\nabla_t,u_t)$ is a solution to the Yang--Mills--Higgs $k$-flow \eqref{YMHKS} defined on $M\times [0,T)$. We have
$$
\sup_{t\in[0,T)}\|u_t\|_{L^2}<+\infty.
$$
\end{proposition}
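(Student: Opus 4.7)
The plan is to differentiate the squared $L^2$-norm of $u_t$ in time and exploit the fact that the second equation of the flow \eqref{YMHKS} is a pure gradient flow (there is no coupling to $F_{\nabla_t}$ on the right-hand side of the $u$-equation). The key observation is that the fiber metric on $\mathrm{ad}E$ induced by the Killing form is invariant under the adjoint action of $G$ and hence is independent of the time-varying connection $\nabla_t$, while the volume form $d\mathrm{vol}_g$ depends only on the fixed background metric $g$.

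First I would write
$$
\frac{d}{dt}\|u_t\|_{L^2}^2 \;=\; 2\int_M \bigl\langle u_t,\, \tfrac{\partial u_t}{\partial t}\bigr\rangle\,d\mathrm{vol}_g,
$$
using time-independence of the pairing and the measure. Substituting the flow equation $\tfrac{\partial u_t}{\partial t} = -\nabla_t^{*(k+1)}\nabla_t^{(k+1)} u_t$ gives
$$
\frac{d}{dt}\|u_t\|_{L^2}^2 \;=\; -2\int_M \bigl\langle u_t,\, \nabla_t^{*(k+1)}\nabla_t^{(k+1)} u_t\bigr\rangle\,d\mathrm{vol}_g.
$$

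Next I would integrate by parts $k+1$ times, using that $\nabla_t^{*(k+1)}$ is by definition the formal $L^2$-adjoint of $\nabla_t^{(k+1)}$, to obtain
$$
\frac{d}{dt}\|u_t\|_{L^2}^2 \;=\; -2\,\|\nabla_t^{(k+1)} u_t\|_{L^2}^2 \;\leq\; 0.
$$
Hence $\|u_t\|_{L^2}^2$ is monotonically non-increasing along the flow, and integrating from $0$ to any $t\in[0,T)$ yields
$$
\|u_t\|_{L^2} \;\leq\; \|u_0\|_{L^2},
$$
which gives the claimed supremal bound.

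There is no real obstacle; the argument is completely parallel to the energy identity for the heat equation. The only points to verify are the legitimacy of moving $\tfrac{d}{dt}$ inside the integral (justified by the smoothness of $(\nabla_t,u_t)$ on the compact manifold $M$) and the connection-independence of the pointwise inner product on $\mathrm{ad}E$, which is immediate from $G$-invariance of the Killing form.
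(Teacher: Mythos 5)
Your argument is correct and is precisely the ``straightforward'' computation the paper has in mind (the paper omits the proof entirely): since the fiber metric and volume form are time-independent, $\frac{d}{dt}\|u_t\|_{L^2}^2=-2\|\nabla_t^{(k+1)}u_t\|_{L^2}^2\leq 0$, so $\|u_t\|_{L^2}\leq\|u_0\|_{L^2}$. The integration by parts is legitimate because $M$ is closed and the unitary connection $\nabla_t$ preserves the Killing-form metric on $\mathrm{ad}E$, exactly as you note.
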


Using Propositions \ref{obst1} and \ref{l2}, we are ready to prove the main result in this subsection.

\begin{theorem} \label{obst2}
Assume $E$ is a line bundle. Suppose $(\nabla_t,u_t)$ is a solution to the Yang--Mills--Higgs $k$-flow \eqref{YMHKS} for some maximal $T<+\infty$. Then
$$\sup_{M\times[0,T)}(|F_{\nabla_t}|+|\nabla_t u_t|)=+\infty.$$
\end{theorem}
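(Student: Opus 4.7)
The strategy is to argue by contradiction: suppose that $\sup_{M\times[0,T)}(|F_{\nabla_t}|+|\nabla_t u_t|)<+\infty$ and show that $(\nabla_t,u_t)$ extends smoothly past $T$, contradicting maximality. Given Corollary \ref{ce2} and Proposition \ref{obst1}, the only piece missing to run this machinery is a uniform pointwise bound on $|u_t|$; once this is in hand, the pointwise bound on $|F_{\nabla_t}|$ together with compactness of $M$ gives $\sup_t\|F_{\nabla_t}\|_{L^2}<+\infty$, Proposition \ref{l2} supplies the $L^2$ bound on $u_t$, Corollary \ref{ce2} then produces uniform bounds on $|\nabla_t^{(l)}F_{\nabla_t}|$ and $|\nabla_t^{(l)}u_t|$ for every $l$, the flow equations \eqref{YMHKS} translate these into uniform bounds on every derivative of $\partial_t\nabla_t$ and $\partial_t u_t$, and Proposition \ref{obst1} delivers a smooth limit $(\nabla_T,u_T)$ that Theorem \ref{le} extends past $T$.

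The essential new ingredient, and the main obstacle, is deriving the pointwise bound on $|u_t|$ from the hypothesis that $|\nabla_t u_t|$ is bounded together with the $L^2$ control supplied by Proposition \ref{l2}. Here the line bundle assumption enters decisively: the adjoint bundle $\textmd{ad}E$ is trivial with trivial induced connection, since the adjoint representation of a $1$-dimensional compact Lie group is trivial. Hence $u_t$ may be regarded as a scalar function on $M$ and $\nabla_t u_t=du_t$, so the hypothesis reads simply as a uniform bound on $|du_t|$ that is independent of the evolving connection.

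To convert this gradient bound into an $L^\infty$ bound I would invoke the Green function $\mathcal{G}(x,y)$ of the scalar Laplacian on the closed $4$-manifold $(M,g)$, normalized so that $\int_M\mathcal{G}(x,y)\,d\mathrm{vol}_g(y)=0$. Integration by parts in the representation formula gives
\begin{equation*}
u_t(x)-\bar u_t=-\int_M\langle\nabla_y\mathcal{G}(x,y),\nabla_y u_t(y)\rangle\,d\mathrm{vol}_g(y),
\end{equation*}
where $\bar u_t$ denotes the mean of $u_t$. On a $4$-manifold the standard estimate $|\nabla_y\mathcal{G}(x,y)|\leq C\,d(x,y)^{-3}$ holds, and this kernel is integrable against $d\mathrm{vol}_g$, so the hypothesis that $|du_t|$ is bounded yields a uniform bound on $|u_t-\bar u_t|$. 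Combining this with $|\bar u_t|\leq(\mathrm{vol}\,M)^{-1/2}\|u_t\|_{L^2}$, which is bounded by Proposition \ref{l2}, produces the sought pointwise bound on $|u_t|$ uniformly on $M\times[0,T)$, supplying the final input to Corollary \ref{ce2} and closing the argument. This use of the Green function is precisely the step that requires the line bundle hypothesis: for higher rank $E$ the induced connection on $\textmd{ad}E$ is nontrivial and $\nabla_t u_t$ cannot be replaced by the gradient of a fixed scalar function.
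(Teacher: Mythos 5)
Your proposal is correct and follows essentially the same route as the paper: argue by contradiction, use a Green function representation formula together with the hypothesis on $|\nabla_t u_t|$ and the $L^2$ bound of Proposition \ref{l2} to obtain $\sup_{M\times[0,T)}|u_t|<+\infty$, and then feed this into Corollary \ref{ce2}, Proposition \ref{obst1} and Theorem \ref{le}. The only cosmetic difference is that you phrase the line-bundle reduction as ``$\mathrm{ad}E$ is trivial, so $\nabla_t u_t = du_t$ and one may use the fixed scalar Green function,'' whereas the paper works with the Green function of $\Delta_{\nabla_t}$ and observes $\nabla_t G_t - \nabla_0 G_t = [\nabla_t-\nabla_0, G_t] = 0$; these are the same abelian structure fact.
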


\begin{proof}
Suppose to the contrary that
$$\sup_{M\times[0,T)}(|F_{\nabla_t}|+| \nabla_t u_t|)<+\infty,$$
which means
$$\sup_{M\times[0,T)}|F_{\nabla_t}|<+\infty, \quad \sup_{M\times[0,T)}| \nabla_t u_t|<+\infty.$$
Denote by $G_t(x,y)$ the Green function associated to the operator $\Delta_{\nabla_t}$, then for any fixed $x\in M$, $\|\nabla_0 G_t(x,\cdot)\|_{L^{\infty}(M)}\leq C_G$ for a constant $C_G$ \cite[Appendix A]{AS}. Note that $\nabla_tG_t-\nabla_0 G_t=[\nabla_t-\nabla_0,G_t]=0$, we conclude that $\|\nabla_t G_t\|_{L^{\infty}(M)}$ is also uniformly bounded. Therefore, using the properties of the Green function in \cite[Appendix A]{AS}, we have
\begin{equation*}
\begin{split}
|u_t(x)-\frac{1}{\textmd{Vol}(M)}\int_M u_t(y)\textmd{d}y|&=|\int_M \Delta_{\nabla_t} G_t(x,y) u_t(y)\textmd{d}y|\\
&=|\int_M \nabla_t G_t(x,y)\nabla_tu_t(y)\textmd{d}y|\\
&< +\infty,
\end{split}
\end{equation*}
which together with Proposition \ref{l2} implies
$$\sup_{M\times[0,T)}|u_t|<+\infty.$$

By Corollary \ref{ce2}, for all $t\in [0,T)$  and $l\in \mathbb{N}\cup\{0\}$, we have $\sup_{M}\Big(|\nabla_t^{(l)}F_{\nabla_{t}}|^2+|\nabla_t^{(l)}u_t|^2\Big)$ is uniformly bounded and so by Proposition \ref{obst1}, $\lim_{t\rightarrow T}(\nabla_t,u_t)=(\nabla_T,u_T)$ exists and is smooth. However, by local existence (Theorem \ref{le}), there exists $\epsilon>0$ such that $(\nabla_t,u_t)$ exists over the extended domain $[0,T+\epsilon)$, which contradicts the assumption that $T$ was maximal. Thus we prove the theorem.

\end{proof}

\vskip 1cm

\section{Blow-up analysis} \label{ba}

In this section, we will address the possibility that the Yang--Mills--Higgs $k$-flow
admits a singularity given no bound on $|F_{\nabla_t}|+|\nabla_tu_t|$. To begin with, we first establish some preliminary scaling laws for the Yang--Mills--Higgs $k$-flow.

\begin{proposition} \label{rf}
Suppose $(\nabla_t,u_t)$ is a solution to the Yang--Mills--Higgs $k$-flow \eqref{YMHKS} defined on $M\times [0,T)$. We define the 1-parameter family $\nabla_t^{\rho}$ with local coefficient matrices given by
$$\Gamma_t^{\rho}(x):=\rho \Gamma_{\rho^{2(k+1)}t}(\rho x),$$
where $\Gamma_t(x)$ is the local coefficient matrix of $\nabla_t$. We define the $\rho$-scaled Higgs field $u_t^{\rho}$ by
$$u_t^{\rho}(x):=\rho u_{\rho^{2(k+1)}t}(\rho x).$$
Then $(\nabla_t^{\rho},u_t^{\rho})$ is also a solution to the Yang--Mills--Higgs $k$-flow \eqref{YMHKS} defined on $[0,\frac{1}{\rho^{2(k+1)}}T)$.
\end{proposition}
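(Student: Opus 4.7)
The plan is a direct computation: verify that each term appearing in the system \eqref{YMHKS} rescales by the same factor $\rho^{2k+3}$ under the proposed parabolic scaling, so that dividing through by $\rho^{2k+3}$ reduces the equation for $(\nabla_t^{\rho}, u_t^{\rho})$ at $(x,t)$ to the equation for $(\nabla_s, u_s)$ at $(\rho x, s)$ with $s=\rho^{2(k+1)}t$. This is exactly the parabolic self-similarity one expects for a flow of spatial order $2(k+1)$.

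First I would establish the base scaling identities. Expanding the curvature as $F = d\Gamma + \Gamma \wedge \Gamma$ and inserting $\Gamma^{\rho}_t(x) = \rho \Gamma_s(\rho x)$ gives immediately
\[
F_{\nabla_t^{\rho}}(x) = \rho^2\, F_{\nabla_s}(\rho x), \qquad u_t^{\rho}(x) = \rho\, u_s(\rho x),
\]
since both $d\Gamma^{\rho}$ and $\Gamma^{\rho}\wedge\Gamma^{\rho}$ produce the factor $\rho^2$. Then an induction on $j$, treating one application of $\nabla^{\rho}_t$ at a time (each of which replaces $\partial_x$ at $x$ by $\rho\,\partial_y$ at $y=\rho x$ and inserts a rescaled Christoffel correction), yields
\[
(\nabla^{\rho}_t)^{(j)} F_{\nabla_t^{\rho}}(x) = \rho^{j+2}\, \nabla_s^{(j)} F_{\nabla_s}(\rho x), \qquad (\nabla^{\rho}_t)^{(j)} u_t^{\rho}(x) = \rho^{j+1}\, \nabla_s^{(j)} u_s(\rho x).
\]

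Next I would check the adjoint operators. Since $\nabla^{*}$ and $D_{\nabla}^{*}$ are first-order and their local coordinate expressions are sums of one derivative plus algebraic terms in the connection/curvature, each application contributes one extra factor of $\rho$. Composing gives the scaling of the leading order term
\[
D^{*}_{\nabla_t^{\rho}} \Delta^{(k)}_{\nabla_t^{\rho}} F_{\nabla_t^{\rho}}(x) \;=\; \rho^{2k+3}\, D^{*}_{\nabla_s} \Delta^{(k)}_{\nabla_s} F_{\nabla_s}(\rho x),
\]
and similarly $\nabla^{*(k+1)}\nabla^{(k+1)} u^{\rho}_t$ scales by $\rho^{2k+3}$. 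For the lower-order terms $P_v^{(w)}$ defined in \eqref{P}, counting $v$ factors of rescaled curvature (contributing $\rho^{2v}$) together with a total of $w$ covariant derivatives (contributing $\rho^{w}$) and the rescaling of the background tensor $T$ shows that $P_1^{(v)}[F_{\nabla}]$, $P_2^{(2k-1)}[F_{\nabla}]$, and the mixed couplings $\nabla^{*(i)}(\nabla^{(k+1)}u\ast\nabla^{(k-i)}u)$ all carry the common factor $\rho^{2k+3}$. Finally, differentiating in time directly gives $\partial_t[\Gamma^{\rho}_t(x)] = \rho^{2k+3}(\partial_s \Gamma_s)(\rho x)$ and $\partial_t[u^{\rho}_t(x)] = \rho^{2k+3}(\partial_s u_s)(\rho x)$, matching the spatial scaling on both sides of \eqref{YMHKS}.

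The one non-mechanical point, and the place where I would spend the most care, is the bookkeeping for the lower-order $P$-terms: one needs to verify that the $T$-factors (depending on the pulled-back/rescaled geometry of $(M,g)$ in the chart) pick up the precise power of $\rho$ required to make the exponent equal $2k+3$ in every summand, so that after dividing through by $\rho^{2k+3}$ the equation at $(x,t)$ for the rescaled pair is identical to the equation at $(\rho x, \rho^{2(k+1)}t)$ for the original pair. The domain statement $t\in [0,T/\rho^{2(k+1)})$ is then just the pull-back of the original time interval $[0,T)$ under $s=\rho^{2(k+1)}t$.
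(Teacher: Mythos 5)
Your computation is correct and is exactly the standard argument: the paper states Proposition \ref{rf} without proof, and the analogous scaling laws in \cite{Ke} and \cite{Z1} are verified by precisely this term-by-term count showing every term of \eqref{YMHKS} is homogeneous of weight $\rho^{2k+3}$ (with $F_{\nabla}$ of weight $2$, $u$ of weight $1$, each covariant derivative or adjoint of weight $1$, and the time derivative of weight $2(k+1)$ plus the field's own weight). The caveat you flag about the $T$-factors in the $P$-terms is the right one: the statement is to be read with the background metric rescaled in the chart, under which those factors acquire exactly the complementary powers of $\rho$, so your proof is complete once that convention is made explicit.
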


Next we will show that in the case that the curvature coupled with a Higgs field is blowing up, as one approaches the maximal time, one can extract a blow-up limit. The proof will closely follow the arguments in \protect {\cite[Proposition 3.25]{Ke}} and \protect {\cite[Theorem 5.2]{Z1}}.

\begin{theorem} \label{bll}
Assume $E$ is a line bundle. Suppose $(\nabla_t,u_t)$ is a solution to the Yang--Mills--Higgs $k$-flow \eqref{YMHKS} defined on some maximal time interval $[0,T)$ with $T< +\infty$. Then there exists a blow-up sequence $(\nabla^i_t,u^i_t)$ and converges pointwise to a smooth solution $(\nabla^{\infty}_t,u^{\infty}_t)$ to the Yang--Mills--Higgs $k$-flow \eqref{YMHKS} defined on the domain $\mathbb{R}^n\times \mathbb{R}_{<0}$.
\end{theorem}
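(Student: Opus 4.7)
The plan is to perform a parabolic rescaling near a sequence of points where $|F_{\nabla_t}|$ nearly achieves its supremum, and to extract a $C^\infty_{\mathrm{loc}}$ subsequential limit using the BBS-type estimates of Proposition \ref{bbs} and Corollary \ref{ce2}. Since $T<\infty$ is maximal, Theorem \ref{obst2} gives $\sup_{M\times[0,T)}(|F_{\nabla_t}|+|\nabla_t u_t|)=+\infty$. I would first exploit the line bundle hypothesis to \emph{decouple} the $u$-equation: because $\mathrm{ad}\,E$ is trivial and the adjoint action of $U(1)$ on its Lie algebra is trivial, $\nabla_t$ acts on $\Omega^0(\mathrm{ad}\,E)$ as the ordinary exterior derivative, so every iterated covariant derivative $\nabla_t^{(j)}u_t$ is built from Levi-Civita data alone and is independent of $t$. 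Consequently $\partial_t u_t=-\nabla_t^{*(k+1)}\nabla_t^{(k+1)}u_t$ is a fixed linear polyharmonic heat equation on a closed manifold, which admits a smooth global solution; in particular $|u_t|$ and $|\nabla_t u_t|$ are uniformly bounded on $M\times[0,T]$, forcing $\sup_{M\times[0,T)}|F_{\nabla_t}|=+\infty$.

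Choose $(x_i,t_i)\in M\times[0,T)$ with $t_i\to T$ and $Q_i:=|F_{\nabla_{t_i}}(x_i)|\geq\frac{1}{2}\sup_{M\times[0,t_i]}|F_{\nabla_t}|\to\infty$. Set $\rho_i:=Q_i^{-1/2}$ and work in normal coordinates around $x_i$. The rescaling of Proposition \ref{rf}, combined with a time translation, produces the blow-up sequence
$$\Gamma^i_s(y):=\rho_i\,\Gamma_{t_i+\rho_i^{2(k+1)}s}\bigl(\exp_{x_i}(\rho_i y)\bigr),\qquad u^i_s(y):=\rho_i\,u_{t_i+\rho_i^{2(k+1)}s}\bigl(\exp_{x_i}(\rho_i y)\bigr),$$
on $B_{r_0/\rho_i}(0)\subset\mathbb{R}^n$, over $s\in[-t_i/\rho_i^{2(k+1)},\,(T-t_i)/\rho_i^{2(k+1)})$. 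The pulled-back metric $g^i:=\rho_i^{-2}\exp_{x_i}^{*}g(\rho_i\,\cdot)$ converges to the Euclidean metric in $C^\infty_{\mathrm{loc}}(\mathbb{R}^n)$, and since $t_i\to T>0$ and $\rho_i\to 0$, every compact subset of $\mathbb{R}^n\times\mathbb{R}_{<0}$ lies inside the domain of $(\nabla^i,u^i)$ for all sufficiently large $i$. By construction $|F_{\nabla^i_s}|(y)\leq 2$ for $s\leq 0$, while the uniform boundedness of $|u|$ and $|\nabla u|$ from the previous paragraph gives $|u^i_s|\leq\rho_i\|u\|_\infty\to 0$ and $|\nabla^i_s u^i_s|\leq\rho_i^2\|\nabla u\|_\infty\to 0$ uniformly on compacta. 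Applying Proposition \ref{bbs} and Corollary \ref{ce2} to $(\nabla^i_s,u^i_s)$---whose constants depend on $g^i$ only through finitely many $C^l$-norms that stay uniformly bounded as $g^i\to\delta$---yields uniform $L^\infty$ bounds on $|\nabla^{i(l)}F_{\nabla^i}|$ and $|\nabla^{i(l)}u^i|$ on every compact subset of $\mathbb{R}^n\times\mathbb{R}_{<0}$, for each $l\in\mathbb{N}\cup\{0\}$. Arzelà--Ascoli combined with a diagonal argument then extracts a $C^\infty_{\mathrm{loc}}$-convergent subsequence whose limit $(\nabla^\infty_s,u^\infty_s)$ solves the YMH $k$-flow on $\mathbb{R}^n\times\mathbb{R}_{<0}$ with respect to the flat Euclidean metric.

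The main technical obstacle is ensuring that the constants in Proposition \ref{bbs} and Corollary \ref{ce2} remain uniform along the sequence of rescaled metrics $g^i$. This requires a careful accounting of how those constants depend on $C^l$-norms of the metric, the Riemann curvature, and the injectivity radius; since $g^i\to\delta$ in $C^\infty_{\mathrm{loc}}$ on any fixed ball, all such geometric quantities are uniformly controlled on compact sets, which is enough to carry the estimates through the limit.
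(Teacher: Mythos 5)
Your overall strategy coincides with the paper's: rescale parabolically at (almost) maximal curvature points, run the Bernstein--Bando--Shi smoothing estimates on the rescaled solutions, and pass to a limit. Your decoupling observation is a genuine and welcome improvement on the paper's handling of the Higgs field: since the adjoint action of $U(1)$ is trivial, $\nabla_t$ acts on $\Omega^0(\mathrm{ad}E)$ through the Levi--Civita connection alone, so $u_t$ solves a fixed linear polyharmonic heat equation, $|u_t|$ and $|\nabla_t u_t|$ are a priori bounded on $[0,T]$, the blow-up is driven by $|F_{\nabla_t}|$ alone, and the rescaled Higgs fields tend to zero. The paper instead normalizes by $|F_{\nabla_{t_i}}(x_i)|+|\nabla_{t_i}u_{t_i}(x_i)|$ and must re-run the Green-function argument of Theorem \ref{obst2} on the rescaled solutions to control $\sup|u^i_t|$ before its smoothing estimates apply; your route avoids that entirely and is arguably more robust.

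There is, however, one genuine gap in your compactness step. The smoothing estimates give uniform bounds on $|(\nabla^i)^{(l)}F_{\nabla^i}|$ and $|(\nabla^i)^{(l)}u^i|$, which are gauge-invariant quantities; they do not bound the coefficient matrices $\Gamma^i$ or their derivatives, and it is to the latter that Arzel\`a--Ascoli would have to be applied. Even for a line bundle, $\Gamma^i$ and $\Gamma^i+d\lambda_i$ have identical curvature while $d\lambda_i$ can be arbitrarily wild, so no subsequence of the $\Gamma^i$ need converge without a prior gauge choice. The paper supplies this via Uhlenbeck's Coulomb gauge theorem on small balls together with the Donaldson--Kronheimer patching theorem; in your abelian setting you could instead fix the gauge by hand, writing $\nabla^i=d+a^i$ with $d^*a^i=0$ on each ball and using elliptic estimates for $d+d^*$ to convert the curvature bounds into bounds on $a^i$, but some such step must appear. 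A more minor point: Corollary \ref{ce2} is not the right tool here, since its constant depends on the initial data $(\nabla_0,u_0)$, which varies along the blow-up sequence; the local finite-time estimate of Corollary \ref{ce1}, applied on translated unit time intervals $[\tau-1,\tau]$ as in the paper, is what actually yields the interior derivative bounds.
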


\begin{proof}
From Theorem \ref{obst2}, we must have
$$\lim_{t\rightarrow T}\sup_M\Big(|F_{\nabla_t}|+|\nabla_t u_t|\Big)=+\infty.$$
Therefore, we can choose a sequence of times $t_i\nearrow T$ within $[0,T)$, and a sequence of points $x_i$, such that
$$|F_{\nabla_{t_i}}(x_i)|+|\nabla_{t_i} u_{t_i}(x_i)| =\sup_{M\times [0,t_i]}\Big(|F_{\nabla_t}|+|\nabla_t u_t|\Big).$$
Let $\{\rho_i\}\subset \mathbb{R}_{>0}$ be constants to be determined. Define $\nabla_t^i(x)$ by
$$\Gamma_t^i(x)=\rho_i^{\frac{1}{2(k+1)}}\Gamma_{\rho_i t+t_i}(\rho_i^{\frac{1}{2(k+1)}} x+x_i)$$
and
$$u_t^i(x)=\rho_i^{\frac{1}{2(k+1)}}u_{\rho_i t+t_i}(\rho_i^{\frac{1}{2(k+1)}} x+x_i).$$
By Proposition \ref{rf}, $(\nabla^i_t,u^i_t)$ are also solutions to Yang--Mills--Higgs $k$-flow \eqref{YMHKS} and the domain for each $(\nabla^i_t,u^i_t)$ is $B_0(\rho_i^{-\frac{1}{2(k+1)}})\times [-\frac{t_i}{\rho_i},\frac{T-t_i}{\rho_i})$. We observe that
$$F_t^i(x):=F_{\nabla_t^i}(x)=\rho_i^{\frac{1}{k+1}}F_{\nabla_{\rho_i t+t_i}}(\rho_i^{\frac{1}{2(k+1)}}x+x_i),$$
which means
\begin{equation*}
\begin{split}
&\sup_{t\in[-\frac{t_i}{\rho_i},\frac{T-t_i}{\rho_i})}\Big(|F_t^i(x)|+|\nabla_{t}^iu_t^i(x)|\Big)\\
&=\rho_i^{\frac{1}{k+1}}\sup_{t\in[-\frac{t_i}{\rho_i},\frac{T-t_i}{\rho_i})}\Big(|F_{\nabla_{\rho_i t+t_i}}(\rho_i^{\frac{1}{2(k+1)}} x+x_i)|+|\nabla_{\rho_i t+t_i} u_{\rho_i t+t_i}(\rho_i^{\frac{1}{2(k+1)}} x+x_i)|\Big)\\
&=\rho_i^{\frac{1}{k+1}}\sup_{t\in[0,t_i]}\Big(|F_{\nabla_t}(x)|+|\nabla_tu_t(x)|\Big)\\
&=\rho_i^{\frac{1}{k+1}}\Big(|F_{\nabla_{t_i}}(x_i)|+|\nabla_{t_i}u_{t_i}(x_i)|\Big).
\end{split}
\end{equation*}
Therefore, setting
$$\rho_i=\Big(|F_{\nabla_{t_i}}(x_i)|+|\nabla_{t_i}u_{t_i}(x_i)|\Big)^{-(k+1)},$$
which gives
\begin{equation} \label{model}
1=|F^i_0(0)|+|\nabla_0^i u_0^i(0)|=\sup_{t\in[-\frac{t_i}{\rho_i},0]}\Big(|F_t^i(x)|+|\nabla_t^i u_t^i(x)|\Big).
\end{equation}

Now, we are ready to construct smoothing estimates for the sequence $(\nabla^i_t,u^i_t)$. Let $y\in \mathbb{R}^n$, $\tau\in \mathbb{R}_{\leq 0}$. For any $s\in \mathbb{N}$,
$$\sup_{t\in [\tau-1,\tau]}\Big(|\gamma^s_y F^i_t(x)|+|\gamma^s_y \nabla_t^i u^i_t(x)|\Big)\leq 1.$$
Note that $E$ is a line bundle, and similar to the proof of Theorem \ref{obst2}, it suffices to use Corollary \ref{ce1}. Then for all $q\in \mathbb{N}$, one may choose $s\geq(k+1)(q+1)$ so that there exists positive constant $C_q$ such that
\begin{equation*}
\begin{split}
&\sup_{x\in B_y(\frac{1}{2})}\Big(|(\nabla^i_{\tau})^{(q)} F^i_{\tau}(x)|+|(\nabla^i_{\tau})^{(q)}  u^i_{\tau}(x)|\Big)\\
&\leq\sup_{x\in B_y(1)}\Big(|\gamma^s_y(\nabla^i_{\tau})^{(q)} F^i_{\tau}(x)|+|\gamma^s_y(\nabla^i_{\tau})^{(q)}  u^i_{\tau}(x)|\Big)\\
&\leq C_q.
 \end{split}
\end{equation*}
Then by the Coulomb Gauge Theorem of Uhlenbeck \cite[Theorem 1.3]{U} (also see \cite{HT}) and Gauge Patching Theorem \cite[Corollary 4.4.8]{DK}, passing to a subsequence (without changing notation) and in an appropriate gauge, $(\nabla^i_t,u^i_t)\rightarrow (\nabla^{\infty}_t,u^{\infty}_t)$ in $C^{\infty}$.

\end{proof}

\vskip 1cm

\section{Proof of Theorem \ref{mth2}} \label{pot}

The following energy estimates are similar to the ones in \cite[Section 6]{Z1}.

\begin{proposition} \label{hoymhe}
Suppose $(\nabla_t,u_t)$ is a solution to the Yang--Mills--Higgs $k$-flow \eqref{YMHKS} defined on $M\times [0,T)$. The Yang--Mills--Higgs $k$-energy \eqref{YMHKF} is decreasing along the flow \eqref{YMHKS}.
\end{proposition}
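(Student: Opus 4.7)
The plan is to verify directly that (\ref{YMHKS}) is the negative $L^2$-gradient flow of the functional (\ref{YMHKF}), so that monotonicity becomes the standard gradient-flow identity
$$\frac{d}{dt}\mathcal{YMH}_k(\nabla_t,u_t) = -\int_M \left(|\partial_t \nabla_t|^2 + |\partial_t u_t|^2\right)d\mathrm{vol}_g \le 0.$$
Concretely, I would begin by differentiating under the integral,
$$\frac{d}{dt}\mathcal{YMH}_k(\nabla_t,u_t) = \int_M \left\langle \nabla_t^{(k)}F_{\nabla_t},\, \partial_t\bigl[\nabla_t^{(k)}F_{\nabla_t}\bigr]\right\rangle + \left\langle \nabla_t^{(k+1)}u_t,\, \partial_t\bigl[\nabla_t^{(k+1)}u_t\bigr]\right\rangle\, d\mathrm{vol}_g,$$
and then commuting $\partial_t$ past the iterated covariant derivatives.

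Write $\dot\nabla_t := \partial_t\nabla_t \in \Omega^1(\mathrm{ad}E)$. The two basic identities are $\partial_t F_{\nabla_t} = D_{\nabla_t}\dot\nabla_t$ and $[\partial_t,\nabla_t]\xi = \dot\nabla_t \ast \xi$; iterating the second gives
$$\partial_t\bigl[\nabla_t^{(j)}\xi\bigr] \;=\; \nabla_t^{(j)}\partial_t\xi \;+\; \sum_{i=0}^{j-1}\nabla_t^{(i)}\dot\nabla_t \ast \nabla_t^{(j-1-i)}\xi.$$
Applying this with $\xi = F_{\nabla_t}$ (and $j=k$) and with $\xi = u_t$ (and $j=k+1$) expands each integrand into a principal term plus lower-order $P$-type terms each of which carries a factor of $\dot\nabla_t$.

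Next I would integrate by parts, moving every $\nabla_t$ off of the $\partial_t$-factor. The principal piece of the curvature integrand becomes
$$\left\langle \nabla_t^{(k)}F_{\nabla_t},\, \nabla_t^{(k)}D_{\nabla_t}\dot\nabla_t\right\rangle \longmapsto \left\langle \dot\nabla_t,\, (-1)^{k}D_{\nabla_t}^{*}\Delta_{\nabla_t}^{(k)}F_{\nabla_t}\right\rangle,$$
while the collection of commutator remainders, after integration by parts, assembles into $\sum_{v=0}^{2k-1}P_1^{(v)}[F_{\nabla_t}] + P_2^{(2k-1)}[F_{\nabla_t}]$ paired against $\dot\nabla_t$. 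The Higgs integrand yields $\langle \partial_t u_t,\, \nabla_t^{*(k+1)}\nabla_t^{(k+1)}u_t\rangle$ as its principal part, together with cross-terms paired against $\dot\nabla_t$ that, after integration by parts, reproduce exactly $\sum_{i=0}^{k}\nabla_t^{*(i)}(\nabla_t^{(k+1)}u_t \ast \nabla_t^{(k-i)}u_t)$. Substituting the flow equations (\ref{YMHKS}) then collapses the whole expression to $-\|\partial_t\nabla_t\|_{L^2}^2 - \|\partial_t u_t\|_{L^2}^2$, as claimed.

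The main obstacle I anticipate is purely bookkeeping: one must verify that every commutator produced while pushing $\partial_t$ through $\nabla_t^{(k)}$ and $\nabla_t^{(k+1)}$ reassembles, after integration by parts, into precisely the $P_1^{(v)}$, $P_2^{(2k-1)}$, and mixed $\nabla_t^{*(i)}(\nabla_t^{(k+1)}u_t \ast \nabla_t^{(k-i)}u_t)$ terms of (\ref{EL}), with matching signs and indexing ranges. Since the computation is formally identical to the case where $u$ takes values in $\Omega^0(E)$ treated in \cite[Section 6]{Z1} (only the pointwise pairings change, because $u \in \Omega^0(\mathrm{ad}E)$ is now Lie-algebra valued), I would not redo the combinatorics from scratch but instead indicate the structural correspondence and verify the endpoint signs, so that the cancellation against (\ref{YMHKS}) is manifest.
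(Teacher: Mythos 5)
Your proposal is correct and is essentially the argument the paper intends: the paper omits the proof and refers to \cite[Section 6]{Z1}, where the monotonicity is obtained by exactly this computation --- differentiating under the integral, commuting $\partial_t$ past the covariant derivatives, integrating by parts to recognize the Euler--Lagrange operator \eqref{EL}, and substituting the flow \eqref{YMHKS} to get $\frac{d}{dt}\mathcal{YMH}_k = -\|\partial_t\nabla_t\|_{L^2}^2 - \|\partial_t u_t\|_{L^2}^2 \le 0$. Your caveat about the bookkeeping of the $P$-type commutator terms is the right one to flag, and it is handled in the cited reference in the same way you describe.
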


\begin{proposition} \label{ymhe}
Suppose $(\nabla_t,u_t)$ is a solution to the Yang--Mills--Higgs $k$-flow \eqref{YMHKS} defined on $M^4\times [0,T)$ with $T< +\infty$, then the Yang--Mills--Higgs energy
$$
\mathcal{YMH}(\nabla_t,u_t)=\frac{1}{2}\int_M\Big[|F_{\nabla_t}|^2+|\nabla_t u_t|^2\Big]d\mathrm{vol}_g
$$
is bounded along the flow \eqref{YMHKS}.
\end{proposition}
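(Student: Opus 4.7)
The plan is to leverage two outputs of Proposition \ref{hoymhe}: the pointwise-in-time bound $\|\nabla_t^{(k)}F_{\nabla_t}\|_{L^2}^2+\|\nabla_t^{(k+1)}u_t\|_{L^2}^2\leq 2\mathcal{YMH}_k(\nabla_0,u_0)$, and the time-integrated gradient-flow dissipation
\begin{equation*}
\int_0^T\bigl(\|\partial_t\nabla_t\|_{L^2}^2+\|\partial_t u_t\|_{L^2}^2\bigr)\,dt\leq \mathcal{YMH}_k(\nabla_0,u_0)<\infty,
\end{equation*}
obtained by integrating the identity $\tfrac{d}{dt}\mathcal{YMH}_k=-\|\partial_t\nabla_t\|_{L^2}^2-\|\partial_t u_t\|_{L^2}^2$ that comes from reading the flow \eqref{YMHKS} as (minus) the $L^2$-gradient of $\mathcal{YMH}_k$. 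Together with the uniform $L^2$-control of $u_t$ from Proposition \ref{l2}, these inputs will let me close up $\mathcal{YMH}(\nabla_t,u_t)$ by interpolation.

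The Higgs half is essentially immediate: a covariant Gagliardo--Nirenberg interpolation between $\|u_t\|_{L^2}$ (Proposition \ref{l2}) and $\|\nabla_t^{(k+1)}u_t\|_{L^2}$ ($\mathcal{YMH}_k$-bound) controls $\|\nabla_t u_t\|_{L^2}$ uniformly on $[0,T)$. For the curvature half, I would compute
\begin{equation*}
\tfrac{d}{dt}\,\tfrac12\|F_{\nabla_t}\|_{L^2}^2 = \int_M\langle D^{*}_{\nabla_t}F_{\nabla_t},\partial_t\nabla_t\rangle\leq C\|\nabla_t F_{\nabla_t}\|_{L^2}\|\partial_t\nabla_t\|_{L^2},
\end{equation*}
using $|D^{*}_{\nabla_t}F_{\nabla_t}|\leq C|\nabla_t F_{\nabla_t}|$ and Cauchy--Schwarz, then interpolate $\|\nabla_t F_{\nabla_t}\|_{L^2}\leq C\|F_{\nabla_t}\|_{L^2}^{1-1/k}\|\nabla_t^{(k)}F_{\nabla_t}\|_{L^2}^{1/k}+C\|F_{\nabla_t}\|_{L^2}$ via Gagliardo--Nirenberg, the second factor being uniformly bounded. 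Integrating in $t$ and applying Cauchy--Schwarz in time together with the dissipation bound and $T<\infty$ gives, for $H(t):=\sup_{s\leq t}\|F_{\nabla_s}\|_{L^2}^2$, a closed inequality of the shape $H\leq C_1+C_2\,H^{(1-1/k)/2}$. Since $(1-1/k)/2<1$ for every $k\geq 1$, Young's inequality absorbs the super-linear term on the right and yields a uniform bound on $H$, hence on $\|F_{\nabla_t}\|_{L^2}$.

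The main obstacle I expect is that a naive approach—plugging the full flow equation \eqref{YMHKS} into $\tfrac{d}{dt}\|F_{\nabla_t}\|_{L^2}^2$ and integrating by parts—would produce terms involving up to $2k+1$ derivatives of $F_{\nabla_t}$, far beyond what $\mathcal{YMH}_k$ controls. The key trick is instead to estimate $\partial_t\nabla_t$ as a black box in $L^2_t L^2_x$ via the gradient-flow dissipation, so that the delicate high-derivative content of the flow equation never has to be examined; Gagliardo--Nirenberg then handles the single missing derivative on the $F$ side. A secondary technical point is applying covariant Gagliardo--Nirenberg with constants independent of the time-varying connection $\nabla_t$, which on a closed manifold follows from iterated integration by parts but deserves to be stated carefully.
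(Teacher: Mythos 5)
Your argument is correct, but it is organized differently from the proof the paper relies on (the authors defer to \cite{Z1}, Section 6, in the spirit of Kelleher's energy estimates). There one differentiates $\mathcal{YMH}(\nabla_t,u_t)$ in time, substitutes the full right-hand side of \eqref{YMHKS}, and tames the resulting high-derivative $P$-type terms by repeated integration by parts and the interpolation inequalities of \cite{KS}, using the monotonicity of $\mathcal{YMH}_k$ to bound $\|\nabla_t^{(k)}F_{\nabla_t}\|_{L^2}$ and $\|\nabla_t^{(k+1)}u_t\|_{L^2}$, and finally integrates the resulting differential inequality over the finite interval $[0,T)$. You instead extract the exact dissipation identity $\tfrac{d}{dt}\mathcal{YMH}_k=-\|\partial_t\nabla_t\|_{L^2}^2-\|\partial_t u_t\|_{L^2}^2$ (which is the content of the \emph{proof} of Proposition \ref{hoymhe}, not of its statement, so you should record it explicitly) to get an $L^2_tL^2_x$ bound on the velocity, and then only ever pair $\partial_t\nabla_t$ against $D^*_{\nabla_t}F_{\nabla_t}$; this bypasses the $P$-calculus entirely, and your treatment of the Higgs term by pure interpolation between $\|u_t\|_{L^2}$ (Proposition \ref{l2}) and $\|\nabla_t^{(k+1)}u_t\|_{L^2}$ requires no time derivative at all. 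The paper's route is insensitive to whether the flow is exactly a gradient flow; yours is shorter and never needs the detailed structure of the flow equation. Two points to tighten: your closed inequality should read $H\leq C_1+C_2H^{(1-1/k)/2}+C_3H^{1/2}$ because of the additive lower-order term in the Gagliardo--Nirenberg inequality (both exponents are still $<1$, so Young's inequality absorbs them as you say, and for $k=1$ the interpolation step is unnecessary since $\|\nabla_tF_{\nabla_t}\|_{L^2}$ is bounded directly by the $k$-energy); and the uniformity in $\nabla_t$ of the covariant interpolation constants, which you rightly flag, does hold because the proofs of these inequalities use only integration by parts and H\"older, never a commutation of covariant derivatives, so no curvature terms enter.
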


Next, we will complete the proof of Theorem \ref{mth2}. To accomplish this, we first show that the $L^p$-norm controls the $L^{\infty}$-norm by blow-up analysis.

\begin{proposition} \label{lptli}
Assume $E$ is a line bundle. Suppose $(\nabla_t,u_t)$ is a solution to the Yang--Mills--Higgs $k$-flow \eqref{YMHKS} defined on $M^4\times [0,T)$ and
$$\sup_{t\in[0,T)}(\|F_{\nabla_t}\|_{L^p}+\|\nabla_t u_t\|_{L^p})<+\infty.$$
If $p>2$, then
$$\sup_{t\in[0,T)}(\|F_{\nabla_t}\|_{L^{\infty}}+\|\nabla_t u_t\|_{L^{\infty}})<+\infty.$$
\end{proposition}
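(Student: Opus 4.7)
The plan is an argument by contradiction exploiting the blow-up procedure already developed in Theorem \ref{bll} together with the scaling behaviour of the $L^p$-norm. Suppose, for contradiction, that $\sup_{M\times[0,T)}(|F_{\nabla_t}|+|\nabla_tu_t|)=+\infty$. As in the proof of Theorem \ref{bll}, pick times $t_i\nearrow T$ and points $x_i\in M$ so that $Q_i:=|F_{\nabla_{t_i}}(x_i)|+|\nabla_{t_i}u_{t_i}(x_i)|=\sup_{M\times[0,t_i]}(|F_{\nabla_t}|+|\nabla_tu_t|)\to+\infty$, set $\rho_i:=Q_i^{-(k+1)}\to 0$, and rescale as in Proposition \ref{rf}. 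This produces a sequence $(\nabla_t^i,u_t^i)$ on expanding domains in $\mathbb{R}^4\times\mathbb{R}_{\leq 0}$ which, after passing to a subsequence and choosing an appropriate gauge, converges smoothly on compact sets to a smooth limit $(\nabla_t^\infty,u_t^\infty)$ satisfying the normalisation $|F^\infty_0(0)|+|\nabla_0^\infty u_0^\infty(0)|=1$.

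The key new ingredient is to track the hypothesised $L^p$-bound through this rescaling. The blow-up identities $F_t^i(x)=\rho_i^{1/(k+1)}F_{\nabla_{\rho_it+t_i}}(\rho_i^{1/(2(k+1))}x+x_i)$ and $\nabla_t^iu_t^i(x)=\rho_i^{1/(k+1)}\nabla_su_s(\rho_i^{1/(2(k+1))}x+x_i)$ with $s=\rho_it+t_i$ (the exponent $1/(k+1)$ in the second identity comes from combining the $\rho_i^{1/(2(k+1))}$-rescaling of $u$ with the chain-rule factor from a covariant derivative), together with the change of variable $y=\rho_i^{1/(2(k+1))}x+x_i$ in the $4$-dimensional ambient manifold, give
\[
\|F_\tau^i\|_{L^p(B_0(R))}^p+\|\nabla_\tau^iu_\tau^i\|_{L^p(B_0(R))}^p\leq \rho_i^{(p-2)/(k+1)}\bigl(\|F_{\nabla_s}\|_{L^p(M)}^p+\|\nabla_su_s\|_{L^p(M)}^p\bigr)
\]
for every $R>0$ and every $\tau\leq 0$ in the domain. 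Because $p>2$ and $k+1>0$, the prefactor $\rho_i^{(p-2)/(k+1)}$ tends to $0$, while the bracket stays bounded by the hypothesis; hence the left-hand side goes to zero.

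Combining this $L^p$-smallness with the smooth (modulo gauge) convergence on $B_0(R)\times\{\tau\}$, and using that the pointwise norms $|F|$ and $|\nabla u|$ are gauge-invariant, I conclude $F^\infty_\tau\equiv 0$ and $\nabla_\tau^\infty u_\tau^\infty\equiv 0$ on $B_0(R)$ for every $R$ and every $\tau\leq 0$. This contradicts the normalisation $|F^\infty_0(0)|+|\nabla_0^\infty u_0^\infty(0)|=1$, proving the proposition.

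The main obstacle is essentially bookkeeping: one must verify that the spatial volume-element contribution $\rho_i^{-n/(2(k+1))}=\rho_i^{-2/(k+1)}$ (using $n=4$) is strictly beaten by the pointwise scaling contribution $\rho_i^{p/(k+1)}$ of $|F|^p$ and $|\nabla u|^p$ precisely when $p>2$. The critical case $p=2$ is exactly scale-invariant for this flow on $4$-manifolds, so the above smallness strategy necessarily requires the strict inequality; a genuinely different argument would be needed to handle $p=2$.
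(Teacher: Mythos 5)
Your proposal is correct and follows essentially the same route as the paper: blow up at a sequence of almost-maximal points with $\rho_i=Q_i^{-(k+1)}$, track the $L^p$-norm under rescaling to get the factor $\rho_i^{(p-2)/(k+1)}=\rho_i^{(2p-4)/(2k+2)}\to 0$, and contradict the normalisation $|F^\infty_0(0)|+|\nabla_0^\infty u_0^\infty(0)|=1$. The only cosmetic difference is that the paper passes the $L^p$-bound to the limit via Fatou's lemma, whereas you use locally smooth convergence plus gauge-invariance of the pointwise norms; both close the argument in the same way.
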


\begin{proof}
So as to the obtain a contradiction, assume
 $$\sup_{t\in[0,T)}(\|F_{\nabla_t}\|_{L^{\infty}}+\|\nabla_t u_t\|_{L^{\infty}})=+\infty.$$
As we did in Theorem \ref{bll}, we can construct a blow-up sequence $(\nabla^i_t,u^i_t)$, with blow-up limit $(\nabla^{\infty}_t,u^{\infty}_t)$. Noting that \eqref{model}, by Fatou's lemma and natural scaling law,
\begin{equation*}
\begin{split}
\|F_{\nabla^{\infty}_t}\|^p_{L^p}+\|\nabla_t^{\infty} u^{\infty}_t \|^p_{L^p}
&\leq \lim_{i\rightarrow +\infty}\inf(\|F_{\nabla^{i}_t}\|^p_{L^p}+\|\nabla_t^i u^{i}_t \|^p_{L^p})\\
&\leq \lim_{i\rightarrow +\infty}\rho_i^{\frac{2p-4}{2k+2}}(\|F_{\nabla_t}\|^p_{L^p}+\|\nabla_t u_t\|^p_{L^p}).
\end{split}
\end{equation*}
Since $\lim_{i\rightarrow +\infty}\rho_i^{\frac{2p-4}{2k+2}}=0$ when $p>2$, the right hand side of the above inequality tends to zero, which is a contradiction since the blow-up limit has non-vanishing curvature.
\end{proof}

Now we are ready to give the proof of Theorem \ref{mth2}.

{\bf Proof of Theorem \ref{mth2}}.  By the Sobolev embedding theorem, we solve for $p$ such that $W^{k,2}\subset L^{p}$, then $k>0$. In this case, using the interpolation inequalities \cite[Corollary 5.5]{KS} we have
\begin{equation*}
\begin{split}
&\|F_{\nabla_t}\|_{L^{p}}+\|\nabla_t u_t\|_{L^{p}}\\
&\quad \leq CS_{k,p}\sum_{j=0}^k(\|\nabla_t^{(j)}F_{\nabla_t}\|^2_{L^2}+\|\nabla_t^{(j)}u_t\|^2_{L^2}+1)\\
&\quad \leq CS_{k,p}(\|\nabla_t^{(k)}F_{\nabla_t}\|^2_{L^2}+\|F_{\nabla_t}\|^2_{L^2}+\|\nabla_t^{(k+1)}u_t\|^2_{L^2}+\|u_t\|^2_{L^2}+1)\\
&\quad \leq CS_{k,p}(\mathcal{YMH}_k(\nabla_t,u_t)+\mathcal{YMH}(\nabla_t,u_t)+\|u_t\|^2_{L^2}+1).
\end{split}
\end{equation*}
Then using Propositions \ref{hoymhe}, \ref{l2} and \ref{ymhe}, we conclude that the flow exists smoothly for all time.

\vskip 1cm

\section*{Acknowledgments}

HS was supported by the Australian Research Council via grant FL170100020. PZ was supported by the Natural Science Foundation of Anhui Province [Grant Number 2108085QA17].

\end{document}